\newtheorem{thm}{Theorem}[section]
\newtheorem{lem}[thm]{Lemma}
\newtheorem{assum}[thm]{Assumption}
\theoremstyle{definition}
\theoremstyle{remark}
\newtheorem{rem}{Remark}[section]
\newtheorem{defn}{Definition}
\newtheorem{exam}[rem]{Example}
\numberwithin{equation}{section}
\begin{document}

\title[Fractional Klein-Gordon equation with singular mass]{Fractional Klein-Gordon equation with singular mass}


\author[A. Altybay]{Arshyn Altybay}
\address{
  Arshyn Altybay:
  \endgraf
  Department of Mathematics: Analysis, Logic and Discrete Mathematics
  \endgraf
  Ghent University, Belgium
  \endgraf
   and
  \endgraf
   Institute of Mathematics and Mathematical Modeling
  \endgraf
   Al-Farabi Kazakh National University
  \endgraf
  Almaty, Kazakhstan
  \endgraf
  {\it E-mail address} {\rm arshyn.altybay@gmail.com}
 }

\author[M. Ruzhansky]{Michael Ruzhansky}
\address{
	Michael Ruzhansky:
	 \endgraf
  Department of Mathematics: Analysis, Logic and Discrete Mathematics
  \endgraf
  Ghent University, Belgium
  \endgraf
  and
  \endgraf
  School of Mathematical Sciences
    \endgraf
    Queen Mary University of London
  \endgraf
  United Kingdom
	\endgraf
  {\it E-mail address} {\rm michael.ruzhansky@ugent.be}
}

\author[M. Sebih]{Mohammed Elamine Sebih}
\address{
  Mohammed Elamine Sebih:
    \endgraf
  Laboratory of Analysis and Control of Partial Differential Equations
  \endgraf
  Djillali Liabes University
  \endgraf
  Sidi Bel Abbes, Algeria
  \endgraf
  {\it E-mail address} {\rm sebihmed@gmail.com}
 }

\author[N. Tokmagambetov]{Niyaz Tokmagambetov}
\address{
  Niyaz Tokmagambetov:
    \endgraf
  Department of Mathematics: Analysis, Logic and Discrete Mathematics
  \endgraf
  Ghent University, Belgium
  \endgraf
   and
  \endgraf
  Al-Farabi Kazakh National University
  \endgraf
  Almaty, Kazakhstan
  \endgraf
  {\it E-mail address} {\rm tokmagambetov@math.kz}
 }

\thanks{This research was funded by the Science Committee of the Ministry of Education and Science of the Republic of Kazakhstan (Grant No. AP09058069) and by the FWO Odysseus 1 grant G.0H94.18N: Analysis and Partial Differential Equations. Michael Ruzhansky was supported in parts by the EPSRC Grant EP/R003025/1. Arshyn Altybay was supported in parts by the MESRK Grant AP08052028 of the Science Committee of the Ministry of Education and Science of the Republic of Kazakhstan. Mohammed Sebih was supported by the Algerian Scholarship P.N.E. 2018/2019 during his visit to the University of Stuttgart and Ghent University. Also, Mohammed Sebih thanks Professor Jens Wirth and Professor Michael Ruzhansky for their warm hospitality.}

\keywords{Fractional wave equation, Cauchy problem, weak solution, singular mass, very weak solution, regularisation, numerical analysis.}
\subjclass[2010]{35L81, 35L05, 	35D30, 35A35.}

\begin{abstract}
We consider a space-fractional wave equation with a singular mass term depending on the position and prove that it is very weak well-posed. The uniqueness is proved in some appropriate sense. Moreover, we prove the consistency of the very weak solution with classical solutions when they exist. In order to study the behaviour of the very weak solution near the singularities of the coefficient, some numerical experiments are conducted where the appearance of a wall effect for the singular masses of the strength of $\delta^2$ is observed.
\end{abstract}

\maketitle

\section{Introduction}
In this work we investigate the Klein-Gordon equation with a non-negative singular mass term depending on the spacial variable. We use the fractional Laplacian instead of the classical one and prove that the problem has a very weak solution.

The concept of very weak solutions was introduced in \cite{GR} for the analysis of second order hyperbolic equations with non-regular time-dependent coefficients and was applied to several physical models in the papers \cite{RT17a}, \cite{RT17b}. We also refer to \cite{MRT19}, where the authors showed the well posedness in the very weak sense of the damped wave equation with an irregular time-dependent dissipation. Our aim here is to apply this notion to the Klein-Gordon equation with a singular mass term.

The classical Klein-Gordon equation with a constant mass term and generated by the classical Laplacian describes the dynamics of spinless particles. With the aim to describe the propagation of the spinless massive field in curved spacetime in general relativity, the Klein-Gordon equation has been generalized along two lines: one is to include a position-dependent mass term \cite{AST}, \cite{DJ}, \cite{WLLW06}, \cite{WLLW18}, and the second is to introduce a fractional Laplacian \cite{GBSD}, \cite{GGB}. On the other hand in the microscopic scale, e.g. in the theory of fluids, the mass behaves like distributions \cite{B}, which motivates our intention to allow it to be discontinuous or less regular.

The generalized Klein-Gordon equation has widespread applications such as in the theory of quantum gravity and quantum liquids to name only two (see \cite{M} and the references therein).

\section{Main results}
For $\alpha >0$ and $d\in\mathbb N$, we investigate the Cauchy problem for the fractional Klein-Gordon equation
\begin{equation}
\left\lbrace
\begin{array}{l}
u_{tt}(t,x)+(-\Delta)^{\alpha} u(t,x) + m(x)u(t,x)=0 ,~~~(t,x)\in\left[0,T\right]\times \mathbb{R}^{d},\\
u(0,x)=u_{0}(x), \,\,\, u_{t}(0,x)=u_{1}(x), \,\,\, x\in\mathbb{R}^{d}. \label{Equation}
\end{array}
\right.
\end{equation}
Here, the function $m$ is supposed to be non-negative and singular. In the regular situation, i.e. in the case when the coefficient $m$ is a regular function we have the following lemma.

To start with, let us define some notions and notations that we use throughout this paper. Firstly, the notation $f\lesssim g$ means that there exists a positive constant $C$ such that $f \leq Cg$.
Secondly, the fractional Sobolev space $H^{\alpha}(\mathbb{R}^{d})$ is defined as follows :
$
H^{\alpha}(\mathbb{R}^{d})=\big\{ u\in L^{2}(\mathbb{R}^{d}): \Vert u\Vert_{H^{\alpha}} < +\infty \big\}
$, where $\Vert u\Vert_{H^{\alpha}}:=\Vert u\Vert_{L^2}+\Vert (-\Delta)^{\frac{\alpha}{2}}u\Vert_{L^2}$.\\
We will also use the following notation :
\begin{equation*}
\Vert u(t,\cdot)\Vert := \Vert u(t,\cdot)\Vert_{H^{\alpha}} + \Vert \partial_{t}u(t,\cdot)\Vert_{L^2}.
\end{equation*}

\begin{lem}\label{lem-est}
Let $m\in L^{\infty}(\mathbb{R}^d)$ and $m\geq 0$. Suppose that $u_0\in H^{\alpha}(\mathbb R^d)$ and $u_1\in L^{2}(\mathbb R^d)$. Then, there is a unique solution $u\in C([0, T]; H^{\alpha}(\mathbb R^d))\cap C^{1}([0, T]; L^{2}(\mathbb R^d))$ to (\ref{Equation}), and it satisfies the estimate
\begin{equation}
\Vert u(t,\cdot)\Vert^{2} \lesssim \left(1+\Vert m\Vert_{L^{\infty}}\right)\left[\Vert u_{1}\Vert_{L^2}^{2}+\Vert u_{0}\Vert_{H^{\alpha}}^{2}\right]. \label{Energy estimate}
\end{equation}

\end{lem}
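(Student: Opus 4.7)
The plan is to combine a standard existence/uniqueness result for linear second–order hyperbolic equations with a straightforward energy estimate, and then close the bound for $\|u\|_{H^\alpha}$ with a Grönwall argument.

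First, for existence and uniqueness I would recast \eqref{Equation} as a first-order system in the energy space $H^\alpha(\R^d)\times L^2(\R^d)$, setting $U=(u,u_t)^T$. The spatial operator $(-\Delta)^\alpha$ is a non-negative self-adjoint Fourier multiplier on $L^2$, and multiplication by $m\in L^\infty(\R^d)$ is a bounded self-adjoint perturbation. Hence the generator
$A=\begin{pmatrix}0 & I\\ -(-\Delta)^\alpha - m & 0\end{pmatrix}$
generates a $C_0$-group on $H^\alpha\times L^2$ by standard semigroup theory (one may equally well argue via Galerkin approximations in Fourier variables). This yields a unique solution $u\in C([0,T];H^\alpha)\cap C^1([0,T];L^2)$.

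Next comes the energy estimate. I would define the natural energy
\begin{equation*}
E(t):=\|\partial_t u(t,\cdot)\|_{L^2}^2+\|(-\Delta)^{\alpha/2}u(t,\cdot)\|_{L^2}^2+\int_{\R^d}m(x)\,|u(t,x)|^2\,\d x,
\end{equation*}
which is well-defined and non-negative because $m\ge 0$. Differentiating in $t$, using the self-adjointness of $(-\Delta)^{\alpha/2}$, multiplying the equation by $\overline{\partial_t u}$ and taking real parts gives $\tfrac{\d}{\d t}E(t)=0$. Thus $E(t)=E(0)\le \|u_1\|_{L^2}^2+\|(-\Delta)^{\alpha/2}u_0\|_{L^2}^2+\|m\|_{L^\infty}\|u_0\|_{L^2}^2\lesssim (1+\|m\|_{L^\infty})\bigl[\|u_1\|_{L^2}^2+\|u_0\|_{H^\alpha}^2\bigr]$.

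To recover the full $H^\alpha$ norm I need to add the missing $\|u(t,\cdot)\|_{L^2}^2$. For this, I would use
\begin{equation*}
\frac{\d}{\d t}\|u(t,\cdot)\|_{L^2}^2=2\,\Re\langle u,\partial_t u\rangle_{L^2}\le \|u(t,\cdot)\|_{L^2}^2+\|\partial_t u(t,\cdot)\|_{L^2}^2,
\end{equation*}
so that $\tilde E(t):=E(t)+\|u(t,\cdot)\|_{L^2}^2$ satisfies $\tfrac{\d}{\d t}\tilde E(t)\le \tilde E(t)$. Grönwall's inequality gives $\tilde E(t)\le e^{T}\tilde E(0)$ on $[0,T]$, and $\tilde E(0)\lesssim (1+\|m\|_{L^\infty})[\|u_1\|_{L^2}^2+\|u_0\|_{H^\alpha}^2]$. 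Since $\|u(t,\cdot)\|^2\le 2\|u(t,\cdot)\|_{H^\alpha}^2+2\|\partial_t u(t,\cdot)\|_{L^2}^2\lesssim \tilde E(t)$, the claimed estimate \eqref{Energy estimate} follows.

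There is no real obstacle here: the proof is essentially bookkeeping. The only minor subtlety is that the basic conserved energy does not control $\|u\|_{L^2}$ by itself (only $\|(-\Delta)^{\alpha/2}u\|_{L^2}$), which is why the small Grönwall step above is required; this is where the implicit constant in $\lesssim$ picks up its (harmless) dependence on $T$.
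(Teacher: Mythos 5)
Your proposal is correct, and it diverges from the paper's argument in one substantive place. The first half is identical in substance: both you and the authors multiply the equation by $\overline{\partial_t u}$, take real parts, and deduce that the energy $E(t)=\|\partial_t u\|_{L^2}^2+\|(-\Delta)^{\alpha/2}u\|_{L^2}^2+\|m^{1/2}u\|_{L^2}^2$ is conserved, which immediately controls $\partial_t u$, $(-\Delta)^{\alpha/2}u$ and $m^{1/2}u$. The difference is in how the missing $\|u(t,\cdot)\|_{L^2}$ is recovered. The paper treats $-m u$ as a source term, passes to the Fourier side, writes the explicit Duhamel representation $\hat u=\cos(t|\xi|^\alpha)\hat u_0+\frac{\sin(t|\xi|^\alpha)}{|\xi|^\alpha}\hat u_1+\int_0^t\frac{\sin((t-s)|\xi|^\alpha)}{|\xi|^\alpha}\hat f(s,\xi)\,ds$, splits into small and large frequencies to handle the factor $|\xi|^{-\alpha}$, and bounds the source using the already-established estimate on $\|m^{1/2}u\|_{L^2}$. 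You instead augment the energy by $\|u\|_{L^2}^2$, use $\frac{\d}{\d t}\|u\|_{L^2}^2=2\Re\langle u,\partial_t u\rangle\le\|u\|_{L^2}^2+\|\partial_t u\|_{L^2}^2$ and close with Gr\"onwall. Your route is more elementary (no explicit solution formula, no frequency splitting) and delivers the stated single factor $(1+\|m\|_{L^\infty})$ cleanly, whereas the paper's chaining of the source estimate through $\|m\|_{L^\infty}^{1/2}\|m^{1/2}u\|_{L^2}$ is less transparent on that point; both arguments put the $T$-dependence into the implicit constant. On existence and uniqueness the paper is essentially silent, giving only the a priori estimate, while you sketch a standard semigroup (or Galerkin) construction on $H^\alpha\times L^2$ with $m$ as a bounded self-adjoint perturbation; that is a reasonable and correct way to supply the missing step.
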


\begin{proof}
Multiplying the equation (\ref{Equation}) on both sides by $u_t$ and integrating, we get
\begin{equation}
Re \left(\langle\partial_{t}^{2}u(t,\cdot),\partial_{t}u(t,\cdot)\rangle_{L^2} + \langle(-\Delta)^{\alpha}u(t,\cdot),\partial_{t}u(t,\cdot)\rangle_{L^2} + \langle m(\cdot)u(t,\cdot),\partial_{t}u(t,\cdot)\rangle_{L^2} \right)=0,
\label{Energy functional}
\end{equation}
where $\langle \cdot, \cdot \rangle_{L^2}$ is the inner product of $L^2(\mathbb{R}^{d})$.

Easy calculations show that
$
Re \langle\partial_{t}^{2}u(t,\cdot),\partial_{t}u(t,\cdot)\rangle_{L^2}=\frac{1}{2}\partial_{t}\langle\partial_{t}u(t,\cdot),\partial_{t}u(t,\cdot)\rangle_{L^2},
$
\begin{equation*}
Re \langle(-\Delta)^{\alpha}u(t,\cdot),\partial_{t}u(t,\cdot)\rangle_{L^2}=\frac{1}{2}\partial_{t}\langle(-\Delta)^{\frac{\alpha}{2}}u(t,\cdot),(-\Delta)^{\frac{\alpha}{2}}u(t,\cdot)\rangle_{L^2},
\end{equation*}
and
\begin{equation*}
Re \langle m(\cdot)u(t,\cdot),\partial_{t}u(t,\cdot)\rangle_{L^2}=\frac{1}{2}\partial_{t}\langle m^{\frac{1}{2}}(\cdot)u(t,\cdot),m^{\frac{1}{2}}(\cdot)u(t,\cdot)\rangle_{L^2}.
\end{equation*}

Let us denote by
$
E(t):=\Vert\partial_{t}u(t,\cdot)\Vert_{L^2}^{2}+\Vert(-\Delta)^{\frac{\alpha}{2}}u(t,\cdot)\Vert_{L^2}^{2}+\Vert m^{\frac{1}{2}}(\cdot)u(t,\cdot)\Vert_{L^2}^{2},
$
the energy functional of the system (\ref{Equation}).
From (\ref{Energy functional}) it follows that $\partial_{t} E(t)=0$, and thus
$E(t)=E(0).$
By taking in consideration that $\Vert m^{\frac{1}{2}} \, u_{0}\Vert_{L^2}^{2}$ can be estimated by
$
\Vert m^{\frac{1}{2}}\, u_{0} \Vert_{L^2}^{2}\leq \Vert m \, \Vert_{L^{\infty}}\Vert u_{0} \Vert_{L^2}^{2},
$
it follows that
\begin{equation}
\Vert \partial_{t}u(t,\cdot)\Vert_{L^2}^{2}\lesssim\left(\Vert u_{1}\Vert_{L^2}^{2}+\Vert (-\Delta)^{\frac{\alpha}{2}}u_{0}\Vert_{L^2}^{2}+\Vert m\Vert_{L^{\infty}}\Vert u_{0}\Vert_{L^2}^{2}\right), \label{Estimate for d_t u}
\end{equation}
\begin{equation}
\Vert (-\Delta)^{\frac{\alpha}{2}} u(t,\cdot)\Vert_{L^2}^{2} \lesssim \left(\Vert u_{1}\Vert_{L^2}^{2}+\Vert (-\Delta)^{\frac{\alpha}{2}}u_{0}\Vert_{L^2}^{2}+\Vert m\Vert_{L^{\infty}}\Vert u_{0}\Vert_{L^2}^{2}\right), \label{Estimate for Lapl u}
\end{equation}
and
\begin{equation}
\Vert m^{\frac{1}{2}}(\cdot)u(t,\cdot)\Vert_{L^2}^{2} \lesssim \left(\Vert u_{1}\Vert_{L^2}^{2}+\Vert (-\Delta)^{\frac{\alpha}{2}}u_{0}\Vert_{L^2}^{2}+\Vert m\Vert_{L^{\infty}}\Vert u_{0}\Vert_{L^2}^{2}\right). \label{Estimate for mu}
\end{equation}
Hence, the desired estimates for $\partial_{t}u(t,\cdot)$ and $(-\Delta)^{\frac{\alpha}{2}} u(t,\cdot)$ are proved. Let us now estimate $u$. Applying the Fourier transform to (\ref{Equation}), the problem can be rewritten as a second order ordinary differential equation \begin{equation}\label{Equation Fourier transf}
\hat{u}_{tt}(t,\xi)+\vert \xi\vert^{2\alpha}\hat{u}(t,\xi)=\hat{f}(t,\xi),
\end{equation}
with the initial conditions $\hat{u}(0,\xi)=\hat{u}_{0}(\xi)$ and $\hat{u}_{t}(0,\xi)=\hat{u}_{1}(\xi)$. Here $\hat{f}$, $\hat{u}$, denote the Fourier transform of $f$ and $u$ in the spacial variable and
$
f(t,x):=-m(x)u(t,x).
$
We note that in (\ref{Equation Fourier transf}), we see $\hat{f}$ as a source term.

By solving first the homogeneous equation and by application of Duhamel's principle (see, e.g. \cite{Eva98}), we get the following representation of the solution
\begin{equation}
\hat{u}(t,\xi)=\cos (t\vert\xi\vert^{\alpha}) \hat{u}_{0}(\xi) + \frac{\sin (t\vert\xi\vert^{\alpha})}{\vert\xi\vert^{\alpha}} \hat{u}_{1}(\xi) + \int_{0}^{t}\frac{\sin ((t-s)\vert\xi\vert^{\alpha})}{\vert\xi\vert^{\alpha}} \hat{f}(s,\xi) ds. \label{Representation of sol}
\end{equation}
Taking the $L^{2}$ norm in (\ref{Representation of sol}) and using the following estimates: 1) $\vert \cos (t\vert\xi\vert^{\alpha})\vert \leq 1$, for $t\in \left[0,T\right]$ and $\xi\in \mathbb{R}^{d}$,
2) $\vert \sin (t\vert\xi\vert^{\alpha})\vert \leq 1$, for large frequencies and $t\in \left[0,T\right]$ and, 3) $\vert \sin (t\vert\xi\vert^{\alpha})\vert \leq t\vert\xi\vert^{\alpha} \leq T\vert\xi\vert^{\alpha}$, for small frequencies and $t\in \left[0,T\right]$,
we get that
\begin{equation*}
\Vert \hat{u}(t,\cdot)\Vert_{L^2}^{2} \lesssim \Vert \hat{u}_{0}\Vert_{L^2}^{2} + \Vert \hat{u}_{1}\Vert_{L^2}^{2} + \int_{0}^{t}\Vert \hat{f}(s,\cdot)\Vert_{L^2}^{2}ds.
\end{equation*}
By Parseval-Plancherel formula we arrive at
\begin{equation*}
\Vert u(t,\cdot)\Vert_{L^2}^{2} \lesssim \Vert u_{0}\Vert_{L^2}^{2} + \Vert u_{1}\Vert_{L^2}^{2} + \int_{0}^{T}\Vert m(\cdot)u(s,\cdot)\Vert_{L^2}^{2}ds.
\end{equation*}
Using the estimate (\ref{Estimate for mu}) and taking in consideration that the last term in the above estimate can be estimated by
$
\Vert m(\cdot)u(t,\cdot)\Vert_{L^{2}} \leq \Vert m \Vert_{L^{\infty}}^{\frac{1}{2}}\Vert m^{\frac{1}{2}}u(t,\cdot)\Vert_{L^{2}},
$
we get
\begin{equation}
\Vert u(t,\cdot)\Vert_{L^2}^{2} \lesssim \left(1+\Vert m\Vert_{L^{\infty}}\right) \left[\Vert u_{0}\Vert_{H^{\alpha}}^{2}+\Vert u_{1}\Vert_{L^2}^{2}\right]. \label{Estimate for u}
\end{equation}
The estimate (\ref{Energy estimate}) follows by summing the estimates (\ref{Estimate for d_t u}), (\ref{Estimate for Lapl u}) and (\ref{Estimate for u}),  ending the proof. \end{proof}





\subsection{Very weak solutions: Existence} Here, we consider an irregular case when the mass term $m$ of the equation \eqref{Equation} has strong singularities, namely, $\delta$-function or "$\delta^{2}$-function" like behaviours. In what follows, we will understand a multiplication of distributions in the sense of the Colombeau algebra \cite{Obe92}.

Now we introduce a notion of the very weak solution to the Cauchy problem \eqref{Equation} and prove the existence result. We start by regularising the coefficient $m$ using a suitable mollifier $\psi$ generating families of smooth functions $(m_{\varepsilon})_{\varepsilon}$, namely,
$
m_{\varepsilon}(x)=m\ast \psi_{\varepsilon }(x),
$
where
$
\psi_{\varepsilon}(x)=\varepsilon^{-d}\psi(x/\varepsilon)
$
and $\varepsilon\in\left(0,1\right]$. The function $\psi$ is a Friedrichs-mollifier, i.e. $\psi\in C_{0}^{\infty}(\mathbb{R}^{d})$, $\psi\geq 0$ and $\int\psi =1$.

\begin{assum}\label{assum}
We make the following assumption on the regularisation $(m_{\varepsilon})_{\varepsilon}$ of the coefficient $m$: there exist $N_{0}\in \mathbb{N}_{0}$ and $C>0$ such that
\begin{equation}
\Vert m_{\varepsilon}\Vert_{L^{\infty}}\leq C\varepsilon^{-N_{0}},
\label{Moderetness hyp coeff}
\end{equation}
for all $\varepsilon\in(0, 1].$
\end{assum}
We note that by the structure theorems of distributions, such assumption is natural and is satisfied, e.g, for $m\in \mathcal{D}^{\prime}$. Let us give some examples.
\begin{exam}
Let $m(x)=\delta_{0}(x)$. Then, we have
$m_{\varepsilon}(x)=m\ast\psi_{\varepsilon}(x) =\varepsilon^{-d}\psi(\varepsilon^{-1}x)\leq C\varepsilon^{-d}.$ Moreover, for $m(x)=\delta_{0}^{2}(x)$, one can define $m_{\varepsilon}(x) =\varepsilon^{-2d}\psi^{2}(\varepsilon^{-1}x) \leq C\varepsilon^{-2d}.$
\end{exam}

\begin{defn}[Moderateness] \label{Def:Moderetness}
{\rm (i)} A net of functions $(g_{\varepsilon})_{\varepsilon}$ is said to be ${L^{\infty}}$-moderate, if there exist $N\in\mathbb{N}_{0}$ and $c>0$ such that
\begin{equation*}
\Vert g_{\varepsilon}\Vert_{L^{\infty}} \leq c\varepsilon^{-N}.
\end{equation*}
{\rm (ii)} A net of functions $(u_{\varepsilon})_{\varepsilon}$ from $C([0, T]; H^{\alpha})\cap C^{1}([0, T]; L^{2})$ is said to be $C^{1}$-moderate, if there exist $N\in\mathbb{N}_{0}$ and $c>0$ such that
\begin{equation*}
\sup_{t\in[0, T]}\Vert u_{\varepsilon}(t, \cdot)\Vert \leq c\varepsilon^{-N}.
\end{equation*}
\end{defn}

\begin{rem}
By the assumption (\ref{Moderetness hyp coeff}), $m_{\varepsilon}$ is ${L^{\infty}}$-moderate in the sense of the last definition.
\end{rem}

\begin{defn}[Very Weak Solution]
Let $(u_{0},u_{1})\in H^{\alpha}(\mathbb{R}^{d})\times L^{2}(\mathbb{R}^{d})$. Then the net  $(u_{\varepsilon})_{\varepsilon}\in C([0, T]; H^{\alpha}(\mathbb R^{d}))\cap C^{1}([0, T]; L^{2}(\mathbb R^{d}))$ is a very weak solution to the Cauchy problem (\ref{Equation}) if there exists an ${L^{\infty}}$-moderate regularisation $(m_{\varepsilon})_{\varepsilon}$ of the coefficient $m$ such that $(u_{\varepsilon})_{\varepsilon}$ solves the regularized problem
\begin{equation}
\left\lbrace
\begin{array}{l}
\partial_{t}^{2}u_{\varepsilon}(t,x)+(-\Delta)^{\alpha} u_{\varepsilon}(t,x) + m_{\varepsilon}(x)u_{\varepsilon}(t,x)=0 ,~~~(t,x)\in\left[0,T\right]\times \mathbb{R}^{d},\\
u_{\varepsilon}(0,x)=u_{0}(x),\partial_{t}u_{\varepsilon}(0,x)=u_{1}(x),\,\,\, x\in\mathbb{R}^{d},
\label{Regularized equation}
\end{array}
\right.
\end{equation}
for all $\varepsilon\in\left(0,1\right]$, and is $C^{1}$-moderate.
\end{defn}

\begin{thm}
Assume that the regularisation $(m_{\varepsilon})_{\varepsilon}$ of the coefficient $m$ satisfies the moderateness condition (\ref{Moderetness hyp coeff}). Then the Cauchy problem (\ref{Equation}) has a very weak solution.
\end{thm}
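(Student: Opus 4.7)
The plan is to reduce everything to a per-$\varepsilon$ application of Lemma \ref{lem-est} and then track how the energy bound depends on $\varepsilon$ through the moderateness assumption \eqref{Moderetness hyp coeff}.

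First, for each fixed $\varepsilon\in(0,1]$, the regularised coefficient $m_{\varepsilon}=m\ast\psi_{\varepsilon}$ is smooth and non-negative, and by \eqref{Moderetness hyp coeff} it lies in $L^{\infty}(\mathbb R^{d})$. Therefore Lemma \ref{lem-est} applies verbatim to the regularised Cauchy problem \eqref{Regularized equation}, producing a unique solution
\[
u_{\varepsilon}\in C([0,T];H^{\alpha}(\mathbb R^{d}))\cap C^{1}([0,T];L^{2}(\mathbb R^{d})).
\]
This gives the net $(u_{\varepsilon})_{\varepsilon}$ required by the definition of very weak solution, together with the $\varepsilon$-dependent energy estimate
\[
\|u_{\varepsilon}(t,\cdot)\|^{2}\lesssim\bigl(1+\|m_{\varepsilon}\|_{L^{\infty}}\bigr)\bigl[\|u_{0}\|_{H^{\alpha}}^{2}+\|u_{1}\|_{L^{2}}^{2}\bigr],
\]
uniformly in $t\in[0,T]$.

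Next, I would substitute the moderateness hypothesis \eqref{Moderetness hyp coeff} into this estimate. Since $1+C\varepsilon^{-N_{0}}\lesssim \varepsilon^{-N_{0}}$ for $\varepsilon\in(0,1]$, I obtain
\[
\sup_{t\in[0,T]}\|u_{\varepsilon}(t,\cdot)\|^{2}\lesssim\varepsilon^{-N_{0}}\bigl[\|u_{0}\|_{H^{\alpha}}^{2}+\|u_{1}\|_{L^{2}}^{2}\bigr],
\]
and taking square roots yields
\[
\sup_{t\in[0,T]}\|u_{\varepsilon}(t,\cdot)\|\leq c\,\varepsilon^{-N},
\]
for some $c>0$ with $N:=\lceil N_{0}/2\rceil$ (the initial data norms are absorbed into the constant $c$ since $(u_{0},u_{1})\in H^{\alpha}\times L^{2}$ is fixed and independent of $\varepsilon$). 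This is exactly the $C^{1}$-moderateness required by Definition \ref{Def:Moderetness}(ii).

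Combining the two steps, the net $(u_{\varepsilon})_{\varepsilon}$ solves \eqref{Regularized equation} for every $\varepsilon\in(0,1]$ and is $C^{1}$-moderate, so by definition it is a very weak solution to \eqref{Equation}. There is no real obstacle here; the entire argument is essentially a bookkeeping exercise in which Lemma \ref{lem-est} does the analytic work. The only point requiring any care is making sure the factor $1+\|m_{\varepsilon}\|_{L^{\infty}}$ from the energy estimate translates into a polynomial bound in $\varepsilon^{-1}$, which is guaranteed precisely by Assumption \ref{assum}.
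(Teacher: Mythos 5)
Your proposal is correct and follows essentially the same route as the paper: apply Lemma \ref{lem-est} to the regularised problem for each fixed $\varepsilon$, then feed the moderateness bound \eqref{Moderetness hyp coeff} into the energy estimate \eqref{Energy estimate} to get $\sup_{t}\|u_{\varepsilon}(t,\cdot)\|\lesssim\varepsilon^{-N_{0}/2}$, which is the required $C^{1}$-moderateness. Your write-up is in fact more explicit than the paper's about the per-$\varepsilon$ existence step and the absorption of the fixed initial-data norms into the constant.
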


\begin{proof}
Since $u_0$ and $u_1$ are smooth enough, using the moderateness assumption (\ref{Moderetness hyp coeff}) and the energy estimate \eqref{Energy estimate}, we arrive at
\begin{equation*}
\Vert u_{\varepsilon}\Vert\leq C\varepsilon^{-N_{0}/2},
\end{equation*}
where $N_0$ is from (\ref{Moderetness hyp coeff}), which means that $(u_{\varepsilon})_{\varepsilon}$ is $C^{1}$-moderate.
\end{proof}

\subsection{Uniqueness} We say that the Cauchy problem (\ref{Equation}) has a unique very weak solution, if for all families of regularisations $(m_{\varepsilon})_{\varepsilon}$ and $(\Tilde{m}_{\varepsilon})_{\varepsilon}$, of the coefficient $m$, satisfying
$
\Vert m_{\varepsilon}-\Tilde{m}_{\varepsilon}\Vert_{L^{\infty}}\leq C_{k}\varepsilon^{k} \text{~~for all~~} k>0,
$
it follows that
\begin{equation*}
\Vert u_{\varepsilon}(t,\cdot)-\Tilde{u}_{\varepsilon}(t,\cdot)\Vert_{L^{2}} \leq C_{N}\varepsilon^{N}
\end{equation*}
for all $N>0$, for all $t\in[0, T]$, where $(u_{\varepsilon})_{\varepsilon}$ and $(\Tilde{u}_{\varepsilon})_{\varepsilon}$ are the families of solutions corresponding to $(m_{\varepsilon})_{\varepsilon}$ and $(\Tilde{m}_{\varepsilon})_{\varepsilon}$, respectively.


\begin{thm}
Let $T>0$. Assume that $m\geq 0$ in the sense that its regularisations as functions are non-negative. Suppose that $(u_{0},u_{1})\in H^{\alpha}(\mathbb{R}^{d})\times L^{2}(\mathbb{R}^{d})$. Then, the very weak solution to the Cauchy problem (\ref{Equation}) is unique.

\end{thm}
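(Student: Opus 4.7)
The plan is to reduce the uniqueness claim to an inhomogeneous energy estimate applied to the difference of two approximating solutions. Set $U_\varepsilon := u_\varepsilon - \widetilde{u}_\varepsilon$. Subtracting the two regularised equations, $U_\varepsilon$ satisfies
\begin{equation*}
\partial_t^2 U_\varepsilon + (-\Delta)^\alpha U_\varepsilon + m_\varepsilon U_\varepsilon = (\widetilde{m}_\varepsilon - m_\varepsilon)\widetilde{u}_\varepsilon =: f_\varepsilon,
\end{equation*}
with vanishing Cauchy data $U_\varepsilon(0,\cdot)=0$, $\partial_t U_\varepsilon(0,\cdot)=0$. The moderateness assumption on the two regularisations gives $\|m_\varepsilon\|_{L^\infty},\|\widetilde{m}_\varepsilon\|_{L^\infty}\lesssim \varepsilon^{-N_0}$, and the assumption $\|m_\varepsilon-\widetilde{m}_\varepsilon\|_{L^\infty}\le C_k\varepsilon^k$ for every $k$ together with the $C^1$-moderateness of $\widetilde{u}_\varepsilon$ yields, for every $k>0$,
\begin{equation*}
\sup_{s\in[0,T]}\|f_\varepsilon(s,\cdot)\|_{L^2} \le \|m_\varepsilon-\widetilde{m}_\varepsilon\|_{L^\infty}\sup_{s\in[0,T]}\|\widetilde{u}_\varepsilon(s,\cdot)\|_{L^2} \le C_k\varepsilon^{k-N_0/2},
\end{equation*}
so $\|f_\varepsilon\|_{L^2}$ is negligible in $\varepsilon$.

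Next I would repeat the energy computation from Lemma \ref{lem-est}, but with a source on the right. Multiplying by $\partial_t U_\varepsilon$, taking the real part, and using the non-negativity of $m_\varepsilon$, I would obtain
\begin{equation*}
\frac{d}{dt}E_\varepsilon(t) = \Re\langle f_\varepsilon(t,\cdot),\partial_t U_\varepsilon(t,\cdot)\rangle_{L^2},
\end{equation*}
where $E_\varepsilon(t) := \|\partial_t U_\varepsilon(t,\cdot)\|_{L^2}^2 + \|(-\Delta)^{\alpha/2}U_\varepsilon(t,\cdot)\|_{L^2}^2 + \|m_\varepsilon^{1/2}U_\varepsilon(t,\cdot)\|_{L^2}^2$. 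Since $E_\varepsilon(0)=0$, Cauchy–Schwarz and the standard Gronwall trick (differentiating $E_\varepsilon(t)^{1/2}$) yield
\begin{equation*}
E_\varepsilon(t)^{1/2} \lesssim \int_0^t \|f_\varepsilon(s,\cdot)\|_{L^2}\,ds.
\end{equation*}
To control $\|U_\varepsilon(t,\cdot)\|_{L^2}$ itself (which is not directly in $E_\varepsilon$), I would then borrow the Fourier-side Duhamel representation used to prove \eqref{Estimate for u}: writing
\begin{equation*}
\widehat{U}_\varepsilon(t,\xi) = \int_0^t \frac{\sin((t-s)|\xi|^\alpha)}{|\xi|^\alpha}\bigl(\widehat{f}_\varepsilon(s,\xi)-\widehat{m_\varepsilon U_\varepsilon}(s,\xi)\bigr)ds,
\end{equation*}
and using the same split into low and high frequencies, I obtain
\begin{equation*}
\|U_\varepsilon(t,\cdot)\|_{L^2}^2 \lesssim \int_0^t \|f_\varepsilon(s,\cdot)\|_{L^2}^2\,ds + \|m_\varepsilon\|_{L^\infty}\int_0^t\|m_\varepsilon^{1/2}U_\varepsilon(s,\cdot)\|_{L^2}^2\,ds.
\end{equation*}
Substituting the bound on $E_\varepsilon$ (which controls the last term) gives $\|U_\varepsilon(t,\cdot)\|_{L^2}^2 \lesssim (1+T\|m_\varepsilon\|_{L^\infty})\bigl(\int_0^T\|f_\varepsilon(s,\cdot)\|_{L^2}\,ds\bigr)^2$.

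Plugging in the bounds $\|m_\varepsilon\|_{L^\infty}\lesssim \varepsilon^{-N_0}$ and $\|f_\varepsilon\|_{L^2}\lesssim C_k\varepsilon^{k-N_0/2}$, the right-hand side is bounded by $C_k'\varepsilon^{2k-2N_0}$. Since $k$ can be taken arbitrarily large, we conclude $\|U_\varepsilon(t,\cdot)\|_{L^2}\le C_N\varepsilon^N$ for every $N>0$, uniformly in $t\in[0,T]$, which is the desired uniqueness statement. The main obstacle I anticipate is the $L^2$-control of $U_\varepsilon$: the plain energy functional does not see the $L^2$-norm when $m_\varepsilon$ vanishes, so one really has to go through the Fourier/Duhamel step and absorb the $\|m_\varepsilon\|_{L^\infty}$ factor against the superpolynomial decay of $\|f_\varepsilon\|_{L^2}$; everything else is a direct replay of the homogeneous energy estimate already established in Lemma \ref{lem-est}.
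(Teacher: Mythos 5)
Your proposal is correct and follows essentially the same route as the paper: the difference $U_\varepsilon$ solves an inhomogeneous problem with source $f_\varepsilon=(\widetilde{m}_\varepsilon-m_\varepsilon)\widetilde{u}_\varepsilon$, which is negligible by the closeness of the regularisations and the moderateness of $(\widetilde{u}_\varepsilon)_\varepsilon$, and an inhomogeneous energy/Duhamel estimate then beats the fixed polynomial losses $\varepsilon^{-N_0}$. The only difference is cosmetic: the paper obtains the inhomogeneous estimate by Duhamel superposition of homogeneous problems with data $\partial_t V_\varepsilon(x,0;s)=f_\varepsilon(s,x)$ and a direct application of Lemma \ref{lem-est} (whose estimate already contains the $L^2$-norm of the solution through the $H^\alpha$-term), whereas you re-derive it by an energy identity with source plus the Fourier-side Duhamel formula.
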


\begin{proof}
Let $(u_{\varepsilon})_{\varepsilon}$ and $(\Tilde{u}_{\varepsilon})_{\varepsilon}$ be very weak solutions to the Cauchy problem (\ref{Equation}) corresponding to the coefficients $(m_{\varepsilon})_{\varepsilon}$ and $(\Tilde{m}_{\varepsilon})_{\varepsilon}$ and assume that
$
\Vert m_{\varepsilon}-\Tilde{m}_{\varepsilon}\Vert_{L^{\infty}}\leq C_{k}\varepsilon^{k} \text{~~for all~~} k>0.
$
Let us denote by $U_{\varepsilon}(t,x):=u_{\varepsilon}(t,x)-\Tilde{u}_{\varepsilon}(t,x)$, then, $U$ satisfies the equation
\begin{equation}
\left\lbrace
\begin{array}{l}
\partial_{t}^{2}U_{\varepsilon}(t,x)+(-\Delta)^{\alpha} U_{\varepsilon}(t,x) + m_{\varepsilon}(x)U_{\varepsilon}(t,x)=f_{\varepsilon}(t,x),\\
U(0,x)=0, ~~\partial_{t}U_{\varepsilon}(0,x)=0, \label{Equation uniqueness}
\end{array}
\right.
\end{equation}
with
$
f_{\varepsilon}(t,x)=(\Tilde{m}_{\varepsilon}(x)-m_{\varepsilon}(x))\Tilde{u}_{\varepsilon}(t,x).
$
Using Duhamel's principle, $U_{\varepsilon}$ is given by
$
U_{\varepsilon}(x,t)=\int_{0}^{t}V_{\varepsilon}(x,t-s;s)ds
$,
where $V_{\varepsilon}(x,t;s)$ solves the problem
\begin{equation*}
\left\lbrace
\begin{array}{l}
\partial_{t}^{2}V_{\varepsilon}(x,t;s)+(-\Delta)^{\alpha} V_{\varepsilon}(x,t;s) + m_{\varepsilon}(x)V_{\varepsilon}(x,t;s)=0,\\
V_{\varepsilon}(x,0;s)=0, ~\partial_{t}V_{\varepsilon}(x,0;s)=f_{\varepsilon}(s,x).
\end{array}
\right.
\end{equation*}
Taking $U_{\varepsilon}$ in $L^{2}$-norm and using (\ref{Energy estimate}) to get estimate for $V_{\varepsilon}$, we arrive at
\begin{equation*}
\begin{split}
\Vert U_{\varepsilon}(\cdot,t)\Vert_{L^2}  &\leq C \left(1+\Vert m_{\varepsilon}\Vert_{L^{\infty}}\right)^{\frac{1}{2}}\int_{0}^{T}\Vert f_{\varepsilon}(s,\cdot)\Vert_{L^2} ds \\
&\leq C \left(1+\Vert m_{\varepsilon}\Vert_{L^{\infty}}\right)^{\frac{1}{2}}\Vert \Tilde{m}_{\varepsilon}-m_{\varepsilon}\Vert_{L^{\infty}}\int_{0}^{T}\Vert \Tilde{u}_{\varepsilon}(s,\cdot)\Vert_{L^{2}} ds.
\end{split}
\end{equation*}
We have that
$
\Vert m_{\varepsilon}-\Tilde{m}_{\varepsilon}\Vert_{L^{\infty}}\leq C_{k}\varepsilon^{k} \text{~~for all~~} k>0,
$
 the net $(m_{\varepsilon})_{\varepsilon}$ is moderate by assumption and $(\Tilde{u}_{\varepsilon})_{\varepsilon}$ is moderate as a very weak solution to the Cauchy problem (\ref{Equation}). Then, for all $N>0$, we obtain
\begin{equation*}
\Vert U_{\varepsilon}(\cdot,t)\Vert_{L^2}=\Vert u_{\varepsilon}(t,\cdot)-\Tilde{u}_{\varepsilon}(t,\cdot)\Vert_{L^2} \lesssim \varepsilon^{N}.
\end{equation*}
Thus, the very weak solution is unique.
\end{proof}


\subsection{Consistency} We want to prove that in the case when a classical solution exists for the Cauchy problem (\ref{Equation}) as in Lemma \ref{lem-est}, the very weak solution recaptures the classical one.

\begin{thm}
Let $(u_{0},u_{1})\in H^{\alpha}(\mathbb{R}^{d})\times L^{2}(\mathbb{R}^{d})$. Assume that $m\in L^{\infty}(\mathbb{R}^{d})$ is non-negative and, let us consider the Cauchy problem
\begin{equation}
\left\lbrace
\begin{array}{l}
u_{tt}(t,x)+(-\Delta)^{\alpha} u(t,x) + m(x)u(t,x)=0 ,~~~(t,x)\in\left(0, T\right)\times \mathbb{R}^{d},\\
u(0,x)=u_{0}(x),u_{t}(0,x)=u_{1}(x), \,\,\, x\in\mathbb{R}^{d}. \label{Equation with reg. coeff}
\end{array}
\right.
\end{equation}
Let $(u_{\varepsilon})_{\varepsilon}$ be a very weak solution of (\ref{Equation with reg. coeff}). Then for any regularising family $m_{\varepsilon}=m\ast\psi_{\varepsilon}$, for any $\psi\in C_{0}^{\infty}$, $\psi\geq 0$, $\int\psi =1$, the net $(u_{\varepsilon})_{\varepsilon}$ converges to the classical solution of the Cauchy problem (\ref{Equation with reg. coeff}) in $L^{2}$ as $\varepsilon \rightarrow 0$.
\end{thm}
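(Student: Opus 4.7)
The plan is to compare the very weak solution net $(u_{\varepsilon})_{\varepsilon}$ with the classical solution $u$ of \eqref{Equation with reg. coeff} guaranteed by Lemma \ref{lem-est}, and to show that their difference vanishes in $L^{2}$ uniformly in $t\in[0,T]$. Setting $W_{\varepsilon}:=u_{\varepsilon}-u$ and subtracting the two equations, I see that $W_{\varepsilon}$ solves
\begin{equation*}
\partial_{t}^{2}W_{\varepsilon}+(-\Delta)^{\alpha}W_{\varepsilon}+m_{\varepsilon}W_{\varepsilon}=(m-m_{\varepsilon})u,
\end{equation*}
with vanishing Cauchy data. This is structurally identical to the source decomposition used in the uniqueness proof, so I can invoke Duhamel's principle to write $W_{\varepsilon}(t,\cdot)=\int_{0}^{t}V_{\varepsilon}(\cdot,t-s;s)\,ds$, where $V_{\varepsilon}(\cdot,\cdot;s)$ solves the homogeneous fractional Klein--Gordon equation with coefficient $m_{\varepsilon}$ and initial velocity $(m-m_{\varepsilon})(\cdot)u(s,\cdot)$.

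Applying the energy estimate \eqref{Energy estimate} of Lemma \ref{lem-est} to each Duhamel slice, and noting that $\|m_{\varepsilon}\|_{L^{\infty}}\leq \|m\|_{L^{\infty}}$ uniformly in $\varepsilon$ (by Young's inequality, since $\psi\geq 0$ has unit integral), I obtain
\begin{equation*}
\|W_{\varepsilon}(t,\cdot)\|_{L^{2}} \lesssim (1+\|m\|_{L^{\infty}})^{1/2}\int_{0}^{T}\|(m-m_{\varepsilon})u(s,\cdot)\|_{L^{2}}\,ds,
\end{equation*}
with a prefactor independent of $\varepsilon$. Crucially, the classical $u$ here plays the role played by $\widetilde{u}_{\varepsilon}$ in the uniqueness proof, so no moderateness in $\varepsilon$ enters the right-hand side.

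The main obstacle is controlling the integrated source, because the naive attempt to pull out $\|m-m_{\varepsilon}\|_{L^{\infty}}$ fails: for a general $m\in L^{\infty}$, the mollifications $m_{\varepsilon}$ converge to $m$ only pointwise almost everywhere (at Lebesgue points), not uniformly. Instead, I would apply the dominated convergence theorem twice. For each fixed $s\in[0,T]$, the pointwise envelope
\begin{equation*}
|(m-m_{\varepsilon})(x)\,u(s,x)|^{2}\leq 4\,\|m\|_{L^{\infty}}^{2}\,|u(s,x)|^{2}\in L^{1}(\mathbb{R}^{d})
\end{equation*}
together with $m_{\varepsilon}(x)\to m(x)$ a.e.\ yields $\|(m-m_{\varepsilon})u(s,\cdot)\|_{L^{2}}\to 0$. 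A second application in $s$, using the integrable envelope $2\|m\|_{L^{\infty}}\|u(s,\cdot)\|_{L^{2}}\in L^{1}([0,T])$ (guaranteed by $u\in C([0,T];L^{2}(\mathbb{R}^{d}))$), gives
\begin{equation*}
\int_{0}^{T}\|(m-m_{\varepsilon})u(s,\cdot)\|_{L^{2}}\,ds \longrightarrow 0 \quad \text{as } \varepsilon\to 0.
\end{equation*}
Combining with the previous estimate yields $\sup_{t\in[0,T]}\|u_{\varepsilon}(t,\cdot)-u(t,\cdot)\|_{L^{2}}\to 0$, which is precisely the asserted consistency of the very weak solution with the classical one.
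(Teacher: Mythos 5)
Your proposal is correct and follows the same skeleton as the paper's proof: form the difference $W_{\varepsilon}$, observe that it solves the regularised equation with zero data and source $(m-m_{\varepsilon})u$, represent it by Duhamel's principle, and control each slice with the energy estimate \eqref{Energy estimate}, using $\Vert m_{\varepsilon}\Vert_{L^{\infty}}\leq\Vert m\Vert_{L^{\infty}}$ to keep the prefactor uniform in $\varepsilon$. Where you genuinely depart from the paper is the final convergence step. The paper simply asserts $\Vert m-m_{\varepsilon}\Vert_{L^{\infty}}\to 0$ and pulls this factor out of the source integral; as you correctly point out, that assertion fails for a general non-negative $m\in L^{\infty}(\mathbb{R}^{d})$ (take $m$ the characteristic function of a half-space: near the interface $m_{\varepsilon}$ takes values close to $1/2$, so $\Vert m-m_{\varepsilon}\Vert_{L^{\infty}}\geq 1/2$ for all $\varepsilon$), and uniform convergence of mollifications requires uniform continuity of $m$. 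Your substitute --- keeping $(m-m_{\varepsilon})u(s,\cdot)$ inside the $L^{2}$-norm, using a.e.\ convergence of $m_{\varepsilon}$ at Lebesgue points together with the dominated convergence theorem in $x$ and then again in $s$ --- is sound and covers the full generality of the theorem as stated. The trade-off is that the paper's argument, when its hypothesis does hold, yields a quantitative rate in terms of $\Vert m-m_{\varepsilon}\Vert_{L^{\infty}}$, whereas your argument gives convergence without a rate; but yours is the one that actually proves the theorem under the stated assumption $m\in L^{\infty}$.
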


\begin{proof}
The classical solution satifies
\begin{equation*}
\left\lbrace
\begin{array}{l}
u_{tt}(t,x)+(-\Delta)^{\alpha} u(t,x) + m(x)u(t,x)=0 ,~~~(t,x)\in\left(0, T\right)\times \mathbb{R}^{d},\\
u(0,x)=u_{0}(x),u_{t}(0,x)=u_{1}(x), \,\,\, x\in\mathbb{R}^{d}.
\end{array}
\right.
\end{equation*}
For the very weak solution, there is a representation $(u_{\varepsilon})_{\varepsilon}$ such that
\begin{equation*}
\left\lbrace
\begin{array}{l}
\partial_{t}^{2}u_{\varepsilon}(t,x)+(-\Delta)^{\alpha} u_{\varepsilon}(t,x) + m_{\varepsilon}(x)u_{\varepsilon}(t,x)=0 ,~~~(t,x)\in\left(0, T\right)\times \mathbb{R}^{d},\\
u_{\varepsilon}(0,x)=u_{0}(x),\partial_{t}u_{\varepsilon}(0,x)=u_{1}(x), \,\,\, x\in\mathbb{R}^{d}.
\end{array}
\right.
\end{equation*}
Taking the difference of the above equations, we get
\begin{equation*}
\left\lbrace
\begin{array}{l}
\partial_{t}^{2}(u-u_{\varepsilon})(t,x)+(-\Delta)^{\alpha} (u-u_{\varepsilon})(t,x) + m_{\varepsilon}(x)(u-u_{\varepsilon})(t,x)=\eta_{\varepsilon}(t,x),\\
(u-u_{\varepsilon})(0,x)=0,~~\partial_{t}(u-u_{\varepsilon})(0,x)=0, \,\,\, x\in\mathbb{R}^{d},
\end{array}
\right.
\end{equation*}
where
$
\eta_{\varepsilon}(t,x)=(m(x)-m_{\varepsilon}(x))u(t,x).
$
Let us denote by $W_{\varepsilon}(t,x):=(u-u_{\varepsilon})(t,x)$. Once again, using Duhamel's principle, $W_{\varepsilon}$ is given by
$
W_{\varepsilon}(x,t)=\int_{0}^{t}V_{\varepsilon}(x,t-s;s)ds,
$
where $V_{\varepsilon}(x,t;s)$ solves the problem
\begin{equation*}
\left\lbrace
\begin{array}{l}
\partial_{t}^{2}V_{\varepsilon}(x,t;s)+(-\Delta)^{\alpha} V_{\varepsilon}(x,t;s) + m_{\varepsilon}(x)V_{\varepsilon}(x,t;s)=0,\\
V_{\varepsilon}(x,0;s)=0, ~\partial_{t}V_{\varepsilon}(x,0;s)=\eta_{\varepsilon}(s,x).
\end{array}
\right.
\end{equation*}
We have that
$
\Vert m-m_{\varepsilon}\Vert_{L^{\infty}} \rightarrow 0 \text{~~as~~} \varepsilon\rightarrow 0.
$
Taking the $L^{2}$-norm for $W_{\varepsilon}$ and using the energy estimate (\ref{Energy estimate}), we get
\begin{align*}
\Vert W_{\varepsilon}(\cdot,t)\Vert_{L^2} \leq \int_{0}^{T}\Vert V_{\varepsilon}(\cdot,t-s;s)\Vert_{L^2} ds
 \leq C \left(1+\Vert m_{\varepsilon}\Vert_{L^{\infty}}\right)^{1/2}\Vert m-m_{\varepsilon}\Vert_{L^{\infty}}^{1/2}\int_{0}^{T}\Vert u(s,\cdot)\Vert_{L^{2}} ds.
\end{align*}
Since $\Vert m_{\varepsilon}\Vert_{L^{\infty}}\leq C$ it follows that $(u_{\varepsilon})_{\varepsilon}$ converges to $u$ in $L^{2}$ as $\varepsilon\to0$.
\end{proof}

\section{Numerical experiments}

In this Section, we do some numerical experiments. Let us analyse our problem by regularising a distributional mass term $m(x)$ by a parameter $\varepsilon$. We define
$
m_\varepsilon (x):=(m\ast\varphi_\varepsilon)(x),
$
as the convolution with the mollifier
$\varphi_\varepsilon(x)=\frac{1}{\varepsilon} \varphi(x/\varepsilon),$
where
$
\varphi(x)=
\begin{cases}
c \exp{\left(\frac{1}{x^{2}-1}\right)}, |x| < 1, \\
0, \,\,\,\,\,\,\,\,\,\,\,\,\,\,\,\,\,\,\,\,\,\,\,\,\,\,\,\,  |x|\geq 1,
\end{cases}
$
with  $c \simeq 2.2523$ to have
$
\int\limits_{-\infty}^{\infty}  \varphi(x)dx=1.
$
Then, instead of \eqref{Equation} we consider the regularised problem
\begin{equation}\label{RE-01}
\partial^{2}_{t}u_{\varepsilon}(t,x)-\partial^{2}_{x} u_{\varepsilon}(t,x)+ m_{\varepsilon}(x) u_{\varepsilon}(t,x) =0, \; (t,x)\in[0,T]\times\mathbb R,
\end{equation}
with the initial data $u_\varepsilon(0,x)=u_0 (x)$ and $\partial_t u_\varepsilon(0,x)=u_1(x),$ for all $x\in\mathbb R.$ Here, we put
\begin{center}
$
u_0 (x)=
\begin{cases}
\exp{\left(\frac{1}{(x-50)^{2}-0.25}\right)}, \,\, |x-50| < 0.5, \\
0, \,\,\,\,\,\,\,\,\,\,\,\,\,\,\,\,\,\,\,\,\,\,\,\,\,\,\,\,\,\,\, \,\,\,\,\,\, \,\,\, |x-50| \geq 0.5,
\end{cases}
$
\end{center}
and $u_1 (x)\equiv0$. Note that ${\rm supp }\, u_0\subset[49.5, 50.5]$.

\begin{figure}[ht!]
\begin{minipage}[h]{0.29\linewidth}
\center{\includegraphics[scale=0.25]{./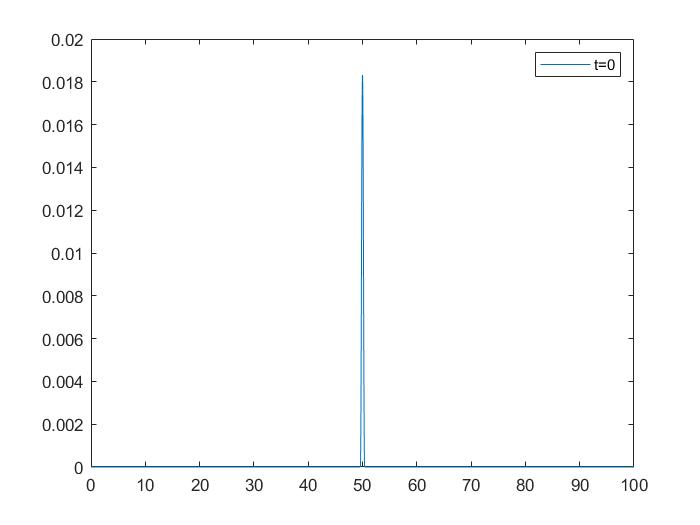}}
\end{minipage}
\hfill
\begin{minipage}[h]{0.29\linewidth}
\center{\includegraphics[scale=0.25]{./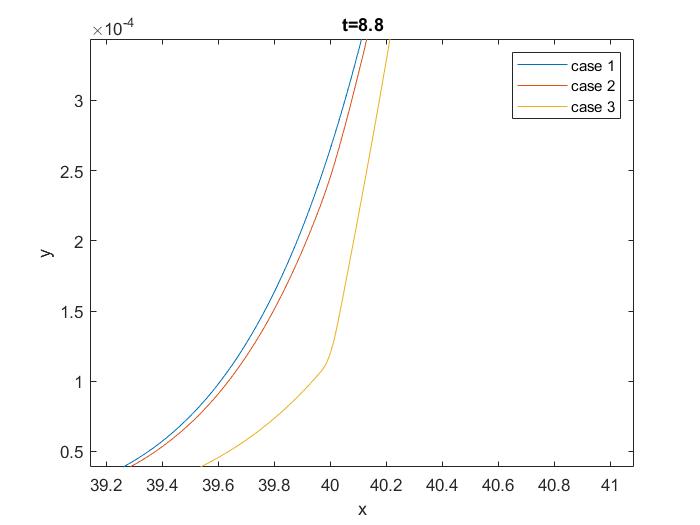}}
\end{minipage}
\hfill
\begin{minipage}[h]{0.29\linewidth}
\center{\includegraphics[scale=0.25]{./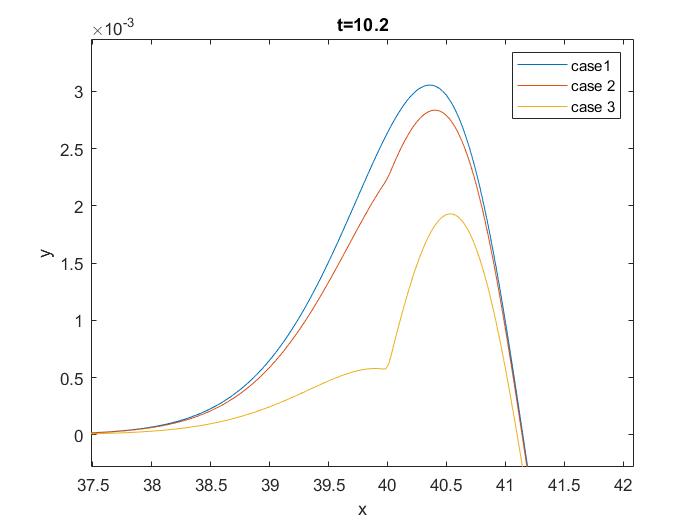}}
\end{minipage}
\hfill
\begin{minipage}[h]{0.29\linewidth}
\center{\includegraphics[scale=0.25]{./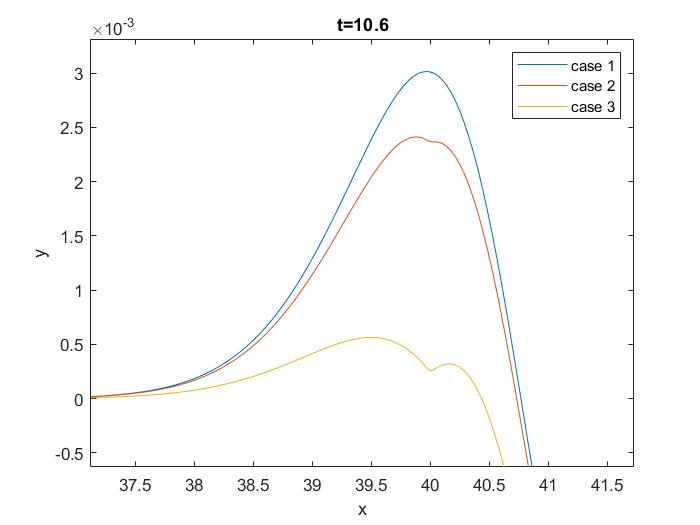}}
\end{minipage}
\hfill
\begin{minipage}[h]{0.29\linewidth}
\center{\includegraphics[scale=0.25]{./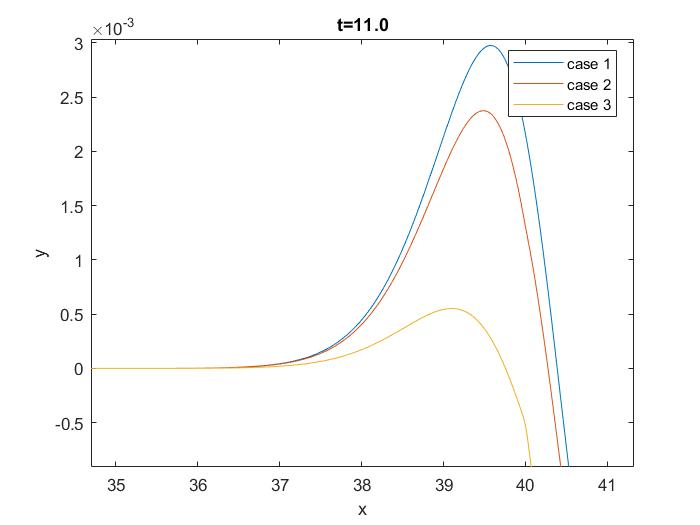}}
\end{minipage}
\hfill
\begin{minipage}[h]{0.29\linewidth}
\center{\includegraphics[scale=0.25]{./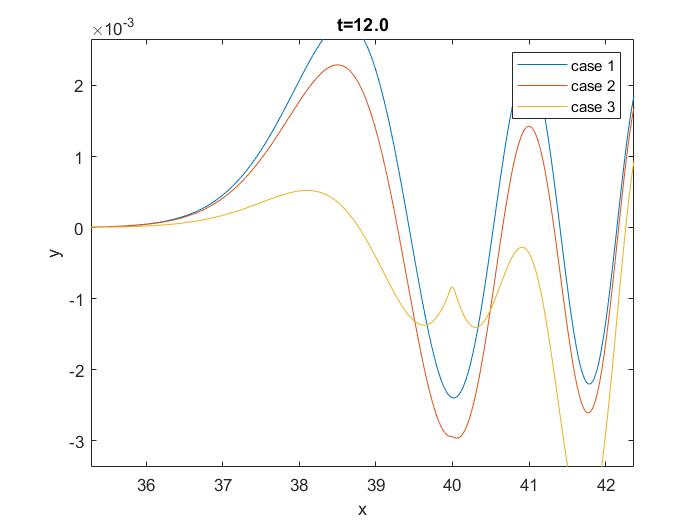}}
\end{minipage}
\caption{In these plots, we analyse behaviours of the solutions of the equation \eqref{RE-01} in the cases of different mass terms. In the upper-left plot, the graphic of the initial function $u_0$ is given. In the further plots, we compare the replacement function $u$ at $t=8.8, 10.2, 10.6, 11.0, 12.0$ for $\varepsilon=0.05$ in the three cases of the mass term, which are described below.} \label{fig1}
\end{figure}

For $m$ we consider the following cases, with $\delta$ denoting the standard Dirac's delta-distribution:
\begin{itemize}
  \item[Case 1:] $m(x)=0$ with $m_{\varepsilon}(x)=0$;
  \item[Case 2:] $m(x)=\delta(x-40)$ with $m_{\varepsilon}(x)=\varphi_\varepsilon(x-40)$;
  \item[Case 3:] $m(x)=\delta(x-40)\times\delta(x-40)$. Here, we understand $m_{\varepsilon}(x)$ as $m_{\varepsilon}(x)=\left(\varphi_\varepsilon(x-40)\right)^{2};$
\end{itemize}

In Figure \ref{fig1}, we analyse behaviours of the solutions to the equation \eqref{RE-01} with the initial function $u_0$ (given in the upper-left plot) in the cases of different mass terms. The further plots of Figure \ref{fig1} are comparing the replacement function $u$ at $t=8.8, 10.2, 10.6, 11.0, 12.0$ for $\varepsilon=0.05$ in the following three cases: Case 1 is corresponding to the mass term $m$ is equal to zero; Case 2 is corresponding to the case when the mass term $m$ is like a $\delta$-function; Case 3 is corresponding to the mass term $m$ is like a square of the $\delta$-function.

By analysing Figure \ref{fig1}, we see that a delta-function mass term affects less on the behaviour of the solution of \eqref{RE-01} compared to the square delta-function like mass term by reflecting some waves in the opposite direction. In the upper-right plot and in the lower plots of Figure \ref{fig1}, we observe that the replacement function $u$ is almost fully reflected in the square delta-function like mass term case. At $t=8.8$ we see that the yellow coloured wave is starting to settle and, from $t=10.2$ is moving in opposite direction. We call the last phenomena, a "wall effect".

All numerical computations are made in C++ by using the sweep method. In above numerical simulations, we use the Matlab R2018b. For all simulations we take $\Delta t=0.2$, $\Delta x=0.01.$

{\bf Conclusion.} The analysis conducted in this article shows that numerical methods work well in situations where a rigorous mathematical formulation of the problem is difficult in the framework of the classical theory of distributions. The concept of very weak solutions eliminates this difficulty in the case of the terms with multiplication of distributions. In contrast with the framework of the Colombeau algebras (see \cite{Obe92}) where the consistency with classical solutions maybe lost, the concept of very weak solutions which depends heavily on the equation under consideration is consistent with classical theory. In particular, in the Klein-Gordon equation case, we see that a delta-function mass term affects less on the behaviour of the waves compared to the square of the delta-function case, the latter causing a so-called "wall effect".

Numerical experiments have shown that the concept of very weak solutions is very suitable for numerical modelling. In addition, using the theory of very weak solutions, we can talk about the uniqueness of numerical solutions of differential equations with strongly singular coefficients in an appropriate sense.

Essentially, the present work can be considered as a generalization of the study of the Klein-Gordon equation by introducing the fractional Laplacian instead of the classical one and by considering a spatially dependent mass. Moreover, we are treating the case of singular masses which has been less investigated in the literature.solutions of differential equations with strongly singular coefficients in an appropriate sense.


\begin{thebibliography}{99}
\bibitem[AST10]{AST}
A. Arda, R. Sever and C. Tezcan.
\newblock Analytical solutions to the Klein-Gordon Equation with position-dependent mass for $q$-parameter Pschl-Teller potential.
\newblock {\it Chinese Phys. Lett.}, 27(1): 010306, 2010.

\bibitem[Bak17]{B} B. Bakhti. \newblock Interacting fluids in an arbitrary external field. \newblock {\it arXiv preprint}, arXiv:1702.04905 (2017).


\bibitem[DJ06]{DJ} A. de Souza Dutra, C.S. Jia. \newblock Classes of exact Klein-Gordon equations with spatially dependent masses: Regularizing the one-dimensional inversely linear potential. \newblock {\it Phys. Lett. A}, 352:484--487, 2006.

\bibitem[Eva98]{Eva98} L.C. Evans. \newblock Partial Differential Equations. \newblock {\it American Mathematical Society}, 1998.

\bibitem[GBSD18]{GBSD}
U. Ghosh, J. Banerjee, S. Sarkar and S. Das. \newblock Fractional Klein-Gordon equation composed of Jumarie fractional derivative and its interpretation by a smoothness parameter.
\newblock {\it Pramana J. Phys.}, 90(74):1--10, 2018.

\bibitem[GGB11]{GGB} A.K. Golmankhaneh, A. Golmankhaneh, D. Baleanu. \newblock On nonlinear fractional Klein-Gordon equation. \newblock {\it 	
Signal Process.}, 91:446--451, 2011.

\bibitem[GR15]{GR} C. Garetto, M. Ruzhansky. \newblock Hyperbolic second order equations with non-regular time dependent coefficients.
\newblock {\it Arch. Rational Mech. Anal.}, 217(1):113--154, 2015.

\bibitem[Ma14]{M}
Y. Ma.
\newblock Nonlinear Klein-Gordon equation and its application on f(R) theory of gravitation. PhD thesis.
\newblock {\it Universit\'e Pierre et Marie Curie}, 2014.

\bibitem[MRT19]{MRT19}
J.C. Munoz, M. Ruzhansky and N. Tokmagambetov. \newblock Wave propagation with irregular dissipation and applications to acoustic problems and shallow water.
\newblock {\it J. Math. Pures Appl. (9)}, 123:127--147, 2019.

\bibitem[Obe92]{Obe92}
M. Oberguggenberger, Multiplication of Distributions and Applications to Partial Differential Equations, Pitman Research Notes in Mathematics Series, vol.259, Longman Scientific and Technical, Harlow, 1992.

\bibitem[Par11]{P} Y.J. Park. \newblock Fractional Gagliardo-Nirenberg Inequality, \newblock {\it J. of Chungcheong Math. Society}. 24(3):583--586, 2011.

\bibitem[RT17a]{RT17a} M. Ruzhansky, N. Tokmagambetov. \newblock Very weak solutions of wave equation for Landau Hamiltonian with irregular electromagnetic field. \newblock {\it Lett. Math. Phys.}, 107:591--618, 2017.

\bibitem[RT17b]{RT17b} M. Ruzhansky, N. Tokmagambetov. \newblock Wave equation for operators with discrete spectrum and irregular propagation speed. \newblock {\it Arch. Rational Mech. Anal.}, 226(3):1161--1207, 2017.

\bibitem[WLLW15]{WLLW06} Z. Wang, Z.W. Long, C.Y Long and L.Z Wang. \newblock Analytical solutions of position-dependent mass Klein-Gordon equation for unequal scalar and vector Yukawa potentials. \newblock {\it Indian J. Phys.}, 89(10):1059--1064, 2015.

\bibitem[WLLW18]{WLLW18} B.Q. Wang, Z.W Long, C.Y. Long and S.R. Wu. \newblock Klein-Gordon oscillator with position-dependent mass in the rotating cosmic string spacetime. \newblock {\it Modern Phys. Lett. A}, 33(4):1850025, 2018.
\end{thebibliography}
\end{document}